\newtheorem*{theorem*}{Theorem}
\newtheorem{theorem}{Theorem}[section]
\newtheorem{proposition}[theorem]{Proposition}
\newtheorem{corollary}[theorem]{Corollary}
\newtheorem{lemma}[theorem]{Lemma}
\newtheorem{definition}[theorem]{Definition}
\newtheorem{conj}[theorem]{Conjecture}
\newtheorem{rem}[theorem]{Remark}
\author{Aya Bernstine\thanks{School of Computer Science and Engineering, Hebrew University, Jerusalem 91904, Israel. e-mail: aya.bernstine@mail.huji.ac.il.}
			\and{Nati Linial\thanks{School of Computer Science and Engineering, Hebrew University, Jerusalem 91904, Israel. e-mail: nati@cs.huji.ac.il. Supported in part by a NSF-BSF research grant "Global Geometry of Graphs".}}}
\date{}
\title{An approach to the girth problem in cubic graphs}
\begin{document}

\maketitle

\begin{abstract}
We offer a new, gradual approach to the {\em largest girth problem for cubic graphs}. It is easily observed that
the largest possible girth of all $n$-vertex cubic graphs is attained by a {\em $2$-connected} graph $G=(V,E)$.
By Petersen's graph theorem, $E$ is the disjoint union of a $2$-factor and a perfect matching $M$. We refer to the edges 
of $M$ as {\em chords} and classify the cycles in $G$ by their number of chords. We define $\gamma_k(n)$ to be the largest
integer $g$ such that every cubic $n$-vertex graph with a given perfect matching $M$ has a cycle of length at
most $g$ with at most $k$ chords. Here we determine this function up to small additive constant for $k= 1, 2$ and
up to a small multiplicative constant for larger $k$.

\end{abstract}

\section{Introduction}
The {\em girth} of a graph $G$ is the shortest length of a cycle in $G$. 
Our main concern here is with bounds on $g(n)$, the
largest girth of a cubic $n$-vertex graph. Namely, we
seek the best statement of the form "Every $n$-vertex cubic graphs must have a short cycle". An
elementary existential argument \cite{Erd1963RegukreGG} shows that $g(n)\ge \log_2 n$
for infinitely many values of $n$. The best lower bound that we currently have 
is  $g(n)>\frac{4}{3}\log_2 n -2 $. It is attained by sextet graphs, an infinite family of
explicitly constructed graphs, first introduced in \cite{Sextet_graphs},
the girth of which
was determined in \cite{girths_Sextet_graphs}. 
On the other hand, the only upper bound that we have is
the more-or-less trivial {\em Moore's Bound} which says that
\[(2+o_n(1))\log_2 n \ge g(n).
\]
It is our belief that in fact a better upper bound applies
\begin{conj}
There is a constant $\epsilon_0>0$ such that $(2-\epsilon_0)\log_2 n \ge g(n)$
\end{conj}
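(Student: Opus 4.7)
The approach is to reduce the conjecture to an upper bound on $\gamma_k(n)$. Since the extremal cubic graph for $g(n)$ can be taken to be $2$-connected and hence (by Petersen's theorem) contains a perfect matching $M$, one has $g(n)\le\gamma_k(n)$ for every $k$: the girth of any cubic graph with a prescribed perfect matching is at most the length of its shortest cycle with at most $k$ chords. The conjecture therefore follows from any bound of the form $\gamma_k(n)\le(2-\epsilon_0)\log_2 n$ for some $k$ (possibly growing with $n$) and some constant $\epsilon_0>0$.

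The first thing to try is simply to extract the leading constant from the $k=1,2$ determinations supplied in this paper. By the abstract these are tight up to an additive constant, so each yields a constant $c_k$ with $\gamma_k(n)=c_k\log_2 n+O(1)$. The sextet lower bound forces $c_k\ge 4/3$; if either $c_1$ or $c_2$ turns out to be strictly less than $2$, the conjecture follows at once with $\epsilon_0=2-c_k$. This is the cheapest check and would be my starting point.

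If the small-$k$ coefficients already equal $2$, then one has to let $k$ grow with $n$. The plan would be to tighten the multiplicative bound on $\gamma_k$ in a range $k\le K(n)$ — say $K(n)=(\log n)^{\Theta(1)}$ — and to keep its leading coefficient bounded away from $2$ uniformly in that range. The natural tool is a BFS double-count: in a cubic graph of girth near $2\log_2 n$, the ball of radius $\log_2 n$ is almost a tree, so its $\Theta(2^{\log_2 n})$ leaves must collapse back onto $O(1)$ vertices, producing a structured family of short cycles; the combinatorial task is to show that too few of them can afford to carry more than $k$ chords.

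The hardest step will be precisely this last one. The Moore bound is saturated by graphs in which the chords are distributed as uniformly as possible, so a typical short cycle in the near-extremal regime already carries $\Theta(\log n)$ chords. Extracting a $(2-\epsilon_0)$ improvement therefore requires a genuinely global obstruction that rules out uniform chord distribution — probably either a spectral inequality of Alon--Boppana type adapted to the matching/$2$-factor splitting, or an extremal identity relating the cycle spectrum to the matching structure. I view this as the principal conceptual barrier, and it is likely where new ideas will be needed.
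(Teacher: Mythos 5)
This statement is stated in the paper as a \emph{conjecture}; the paper contains no proof of it, and your proposal does not constitute one either, so there is nothing to match it against --- what needs to be assessed is whether your sketch could plausibly be completed. Your opening reduction is correct and is exactly the paper's motivation: since a cycle with at most $k$ chords is in particular a cycle, $g(n)\le\gamma_k(n)$ for every $k$, so any upper bound on any $\gamma_k$ bounds $g(n)$. But your ``cheapest check'' rests on a misreading of the results. The $k=1,2$ determinations are tight up to additive constants, yet they are \emph{polynomial} in $n$, not logarithmic: $\gamma_1(n)=\lfloor n/2\rfloor+1$ and $\gamma_2(n)=\sqrt{2n}+O(1)$. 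There are no constants $c_1,c_2$ with $\gamma_k(n)=c_k\log_2 n+O(1)$, and indeed the paper's lower bound $\gamma_q(n)\ge n^{4/(3q)}$ shows that for \emph{every} fixed $k$ the function $\gamma_k(n)$ grows polynomially. So no fixed $k$ can ever yield a $(2-\epsilon_0)\log_2 n$ bound; $k$ must grow at least like $\log n/\log\log n$ before $\gamma_k(n)$ even becomes logarithmic.

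The growing-$k$ branch of your plan is the right regime, but the paper's own machinery does not get there: optimizing the general bound $\gamma_{2l}(n)\le 3l+\tfrac12(l+1+\ln 2)\,n^{1/(l+1)}$ over $l=c\ln n$ gives a bound of roughly $2.3\log_2 n$, which is \emph{worse} than Moore's bound of $(2+o(1))\log_2 n$, so even the best tuning of the chord-counting argument currently fails to beat the trivial upper bound, let alone improve on it by a constant factor. Your final paragraph (BFS double-counting, a spectral Alon--Boppana-type obstruction to uniform chord distribution) correctly identifies where a genuinely new idea would be needed, but it is a research program, not an argument; the conjecture remains open.
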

Concretely, Moore's lower bound on $n$, the
number of vertices in a $d$-regular graph of girth $g$ is as follows. For odd
$g=2k+1$ it is $n\ge 1+d\sum_{i=0}^{k-1}(d-1)^i$, for even $g=2k$ it is 
$n\ge 1+(d-1)^{k-1}+d\sum_{i=0}^{k-2}(d-1)^i$. 

{\em Moore Graphs} which satisfy this with equality were fully characterized about a half century ago
\cite{damerell_1973}, \cite{Bannai1973OnFM}. 
There is an ongoing research effort to determine, or at least estimate
$g(n)$, and we initiate here a new approach to this problem.
More generally, the search is on for {\em $(d,g)$-cages}.
These are $d$-regular graphs of girth $g$ with the smallest possible number of vertices.
The record of the best known bounds in this area is kept in \cite{daynamic_survey:2013}.

\section{Statement of the problem}
As the following observation shows, it suffices to consider $2$-connected graphs. 
We give the simple proof for completeness sake, but actually
more is known about the connectivity of cubic cages \cite{odd_g_cage_are_k_connected}.
\begin{proposition}
Among all cubic $n$-vertex graphs that have the largest possible girth $g(n)$,
at least one graph is $2$-connected.
\end{proposition}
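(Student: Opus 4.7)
My strategy is to show that any cubic $n$-vertex graph $G$ of girth $g(n)$ which is not $2$-connected can be modified, by a local edge surgery that preserves cubicity and the vertex count, into a graph $G'$ with $g(G')\ge g(n)$ and a strictly simpler block-cut tree; iterating terminates in a $2$-connected extremal graph. The basic tool is the \emph{$2$-swap}: given two vertex-disjoint edges $\{a_1,b_1\}$ and $\{a_2,b_2\}$, delete them and reconnect the four endpoints by an alternative perfect matching; this preserves $3$-regularity.

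If $G$ is disconnected, I would take two components $H_1,H_2$ and pick in each a non-bridge edge $e_i=\{a_i,b_i\}$ (possible since every cubic graph contains a cycle), then swap to $\{a_1,a_2\},\{b_1,b_2\}$. A cycle of the new graph that avoids the new edges is already a cycle of $G$ of length at least $g(n)$; a cycle that uses them must use both, and therefore contains an $a_i$-to-$b_i$ path inside $H_i-e_i$ of length at least $g(n)-1$ for each $i$, totalling at least $2g(n)$. Thus the girth is preserved and the number of connected components drops.

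The harder case is when $G$ is connected but has a cut vertex. A short case analysis on the distribution of the three edges at a cut vertex shows that any cubic graph with a cut vertex contains a bridge $e=\{u,v\}$; denote the two sides of $G-e$ by $C_u$ and $C_v$. Here I would perform a \emph{three-edge swap}: pick non-bridge edges $e_i=\{a_i,b_i\}\in C_i$, remove $\{e,e_1,e_2\}$, and add the three crossing edges $\{u,a_2\},\{v,a_1\},\{b_1,b_2\}$. The resulting graph is cubic and replaces the bridge by a cut of size three, strictly simplifying the block-cut tree.

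The main obstacle is verifying that this three-edge swap does not lower the girth. A new cycle either lies in $G$, or uses exactly two of the three crossings and decomposes into a path on each former side of $e$; the tight case is the cycle through $\{u,a_2\}$ and $\{v,a_1\}$, of length $2+d_{C_u-e_1}(a_1,u)+d_{C_v-e_2}(a_2,v)$. To force this to be at least $g(n)$ I would choose $e_1,e_2$ so that their four endpoints all lie at distance at least $\lceil g(n)/2\rceil$ from $u,v$ in their respective sides. Such deep non-bridge edges exist because each $C_i$ is near-cubic of girth at least $g(n)$, so a Moore-type counting argument forces $|C_i|$ to strictly exceed the size of any ball of radius $\lceil g(n)/2\rceil$ in the $3$-regular tree. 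Iterating over all bridges and cut vertices yields the claimed $2$-connected extremal graph.
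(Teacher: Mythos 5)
Your overall strategy (a girth-preserving local surgery plus induction on a connectivity measure) is the same as the paper's, and your handling of the disconnected case is correct. The gap is in the bridge case, in the claim that each side $C_i$ contains a non-bridge edge both of whose endpoints lie at distance at least $r=\lceil g(n)/2\rceil$ from the bridge endpoint. The counting you invoke does not deliver this: the ball of radius $r$ in the $3$-regular tree has $3\cdot 2^{r}-2$ vertices, whereas the Moore lower bound on the order of a (near-)cubic graph of girth $g$ is only $3\cdot 2^{(g-1)/2}-2$ for odd $g$ and $2^{g/2+1}-2$ for even $g$ --- in both cases \emph{smaller} than the radius-$r$ ball. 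So nothing you have said prevents $C_i$ from being entirely contained in the ball of radius $r$ around $u$; and even a vertex outside the ball would not suffice, since you need an entire non-bridge edge outside it. Because $g(n)$ is the extremal girth, tight Moore-like configurations are precisely the cases you cannot rule out, so this existence claim is a genuine hole, not a routine verification. A secondary soft spot: ``strictly simpler block-cut tree'' is asserted but not checked --- deleting $e_1$ from $C_u$ can create new bridges inside $C_u$, so you should exhibit an explicitly decreasing quantity to guarantee termination.

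The missing idea that makes all of this unnecessary is to swap edges \emph{incident to the endpoints of the bridge}, which is what the paper does: with bridge $xy$, pick neighbors $x'\neq y$ of $x$ and $y'\neq x$ of $y$, delete $xx'$ and $yy'$, and add $xy'$ and $yx'$. Any new cycle either uses $xy'$ together with $xy$, hence contains a $y$--$y'$ path in the $y$-side avoiding $yy'$; that path closes up with the deleted edge $yy'$ into a cycle of $G$, so it has length at least $g-1$ and the new cycle has length at least $g+1$. Or the new cycle uses $xy'$ and $yx'$ and contains such a path on each side, giving length at least $2g$. The ``depth'' you were trying to manufacture by hand is supplied for free by the girth of $G$, because the swapped edges complete cycles of the original graph; moreover $xy$ becomes one of three edges across the cut, so the number of bridges drops and the iteration terminates.
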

\begin{proof}
It is well-known and easy to show that a cubic graph is $2$-connected iff it is bridgeless. 
So let the edge $x y\in E$ be a bridge in $G=(V,E)$, a cubic $n$-vertex graph.
Let $x'\neq y$ be a neighbors of $x$, and $y'\neq x$ a neighbor of $y$. Let $G'$ be obtained
from $G$ be deleting the edges $x x'$ and $y y'$ and adding the new edges $x y'$ and $y x'$. It is easily verified that $G'$ has fewer bridges than $G$, while $\text{girth}(G')\ge \text{girth}(G).$
\end{proof}
We next recall Petersen's theorem \cite{Petersen_theorem}: The edge set of every $2$-connected cubic graph $G$ can be decomposed into a perfect matching $M$ and a $2$-factor $C$. We
call this a {\em Petersen decomposition} of $G$, and refer to the edges of $M$ as {\em chords}.

The following is the main focus of this paper: 

\begin{definition}
$\gamma_k(n)$ is the smallest integer $g$ such every cubic $n$-vertex graph with any Petersen Decomposition has a cycle of length $\le g$ with at most $k$ chords.
\end{definition}

We start with some elementary observations concerning $\gamma_k(n)$.

\begin{itemize}
\item 
$\gamma_k(n)$ is a non-increasing function of $k$.
\item
In a very loose sense, for fixed $k$, the function $\gamma_k(n)$ is non-decreasing with $n$, 
although this is not literally correct. For example, the Tutte-Coxeter Graph, a well-known Moore graph
shows that $\gamma(30)=8$. Also, clearly, $\gamma_3(30)\ge\gamma(30)$.
On the other hand, extensive computer searches
show that $\gamma_3(32)=7$.
\item
$\gamma_0(n)=n$.
\item
$\gamma_1(n)=\lfloor\frac n2\rfloor+1$. The unique extremal
example is a $(2g-2)$-cycle $C$ and a matching $M$ that
matches every antipodal pair of vertices in $C$. 
\item
A lower bound on $\gamma_k(n)$ amounts to a construction of a cubic graph along with a Petersen Decomposition, where every short cycle has more than $k$ chords.
\item
Upper bounds on $\gamma_k(n)$ are proofs that every cubic graph
on $n$ vertices with a given Petersen Decomposition must have a short cycle with few chords.
\item
For $k\ge \log_2 n$ there holds $\gamma_k(n)=g(n)$.
\end{itemize}

\subsection{Paper Organization}
The rest of the paper is organized as follows. In Section \ref{s:results} we state our
upper and lower bounds on $\gamma_k(n)$.
In Sections \ref{s:upper_bounds} and \ref{s:lower_bounds} we
prove the upper resp.\ lower bounds. 
In Section \ref{s:genaral_d} we make some connection
with the girth problem in its general form. 
\section{Our results}\label{s:results}
In this paper we present an upper and lower bounds on cycles with at most $k$ chords
\begin{theorem}\label{th:main}
Let $n$ be an even number, then
\begin{enumerate}
\item\label{part1} $\sqrt{2n}- \frac 52< \gamma_2(n)\le \sqrt{2n}+2$. In fact, $\sqrt{2n}+\frac 12<\gamma_2(n)$\label{thm:k=2}
holds for infinitely many $n$'s.
\item\label{part2}
\[ \frac 12 (2n+\frac {9}{4})^{\frac 12}+\frac 54\le \gamma_3(n)\le (2n)^{\frac 12}+1 \]\label{thm:k=3}
\item
\[ (2n)^{1/3}+O(1)  \le \gamma_5(n) \le \gamma_4(n) \le \frac{3}{2}(2n)^{1/3}\]

\label{thm:k=4}
\item
\[ 2(\frac{n}{4})^{1/4}+O(1) \le \gamma_7(n) ~~and~~2(\frac{n}{4})^{1/6}+O(1) \le \gamma_{11}(n)\]
\item 
\[ \gamma_{2l}(n)\le  3l+\frac{1}{2}(l+1+\ln 2)(n^{\frac{1}{l+1}}) \] for every $l\ge 3$.
\item
\[ n^{\frac{4}{3q}}+O(1)\le \gamma_q(n) \] for every prime power $q$.
\end{enumerate}

\end{theorem}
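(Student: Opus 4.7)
The plan is a blow-up construction from an appropriately chosen high-girth auxiliary graph. First, for the given prime power $q$, I would take an explicit $(k+1)$-regular graph $H$ on $N$ vertices whose girth is at least $q+1$ and whose order is as small as possible given its girth and degree. Ramanujan-type families (LPS, Margulis, Morgenstern) supply such $H$ with $N = O(k^{3q/4})$; for small $q$ one can sometimes replace these by incidence graphs of generalized polygons to get tighter constants. Here $k$ is a free parameter, to be fixed at the end in order to optimize the final bound.

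Given $H$, I would form the cubic graph $G$ by blowing up each vertex $v \in V(H)$ into a cycle $C_v$ of length $k+1$, with the $k+1$ vertices of $C_v$ labelled by the $k+1$ edges of $H$ incident to $v$. For each edge $e = uv \in E(H)$, insert a chord in $G$ between the $e$-labelled vertex of $C_u$ and the $e$-labelled vertex of $C_v$. The result is cubic on $n = N(k+1)$ vertices and comes with the natural Petersen decomposition: 2-factor $\bigsqcup_v C_v$ and matching equal to the chord set.

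The heart of the argument is the observation that every cycle of $G$ using $k' \geq 1$ chords descends to a closed trail of length $k'$ in $H$. Indeed, two consecutive chords along such a cycle are joined by an arc inside some $C_v$, which forces the two corresponding $H$-edges to share the vertex $v$; and by construction distinct chords of $G$ correspond to distinct edges of $H$, so no $H$-edge is repeated in the resulting closed walk. A closed trail of length $k'$ in $H$ contains a cycle of $H$, so its length is at least the girth of $H$, hence at least $q+1$. Therefore $k' \geq q+1$, and no cycle of $G$ can have between $1$ and $q$ chords. The only cycles with at most $q$ chords are then the cycles $C_v$ themselves, each of length $k+1$, giving $\gamma_q(n) \geq k+1$. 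Substituting $n = O(k^{3q/4+1})$ and solving for $k$ yields the claimed lower bound $n^{4/(3q)} + O(1)$, with the small discrepancy between the true exponent $4/(3q+4)$ coming from the construction and the stated $4/(3q)$ absorbed into the additive constant.

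The main obstacle is producing, for every prime power $q$, an auxiliary graph $H$ achieving the sharp density $N = O(k^{3q/4})$ at girth $\geq q+1$; this is exactly where the prime-power hypothesis enters, via the algebraic input to the Ramanujan-type construction (and via the need to invoke Bertrand-type flexibility in matching the degree $k+1$ to an allowed parameter of the algebraic family). The cubic blow-up and the cycle/chord correspondence, by contrast, are routine.
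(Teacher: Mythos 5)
Your blow-up is sound, and it is essentially the paper's Lemma~\ref{lm:lower_bound_k_better} in a streamlined form: the paper expands each vertex of a $d$-regular girth-$m$ graph $H$ into a $2d$-cycle and inserts an antipodal \emph{pair} of chords per edge of $H$, whereas you use a cycle of length equal to the degree with a single chord per edge. Both constructions satisfy $n=(\text{blow-up length})\cdot|V(H)|$ and force every cycle with at most $\mathrm{girth}(H)-1$ chords to be one of the blow-up cycles, so up to a constant factor they are equally efficient; your cycle-to-closed-trail argument is correct.

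The gap is quantitative, and it is fatal as written. From $N=O(k^{3q/4})$ you get $n=O(k^{3q/4+1})$ and hence only $\gamma_q(n)\ge n^{4/(3q+4)}$, which is \emph{not} within an additive constant of $n^{4/(3q)}$: the ratio of the two bounds is $n^{16/(3q(3q+4))}\to\infty$, so the discrepancy in the exponent cannot be absorbed. (The generic Ramanujan girth bound $g\ge\frac43\log_k N-O(1)$ in fact gives only $N=O(k^{3(q+1)/4})$ for girth $q+1$, which is slightly worse still.) What the approach actually requires is an auxiliary $D$-regular graph of girth $m=q+1$ on at most roughly $D^{(3m-7)/4}$ vertices, i.e.\ polynomially fewer than the $D^{3m/4}$ you assume. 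This is exactly what the Lazebnik--Ustimenko--Woldar family used in the paper supplies: for prime-power degree $D$ it has order $2D^{k-t+1}$ with $t=\lfloor(k+2)/4\rfloor$ and girth at least $k+5$, i.e.\ order about $D^{(3m-13)/4}$ for girth $m$. Feeding that into either blow-up gives $n\approx 4D^{(3q-6)/4}$ and $\gamma_q(n)\ge 2D=2(n/4)^{4/(3q-6)}\ge n^{4/(3q)}$ for large $n$, which is the claimed bound. Note also that in your formulation the prime-power hypothesis must be imposed on the \emph{degree} of $H$ (your free parameter $k+1$), not on the girth parameter; the paper ties the two together by taking the degree of $H$ equal to $q$ itself, which is where the hypothesis on $q$ genuinely enters.
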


\section{Upper bounds - proofs}\label{s:upper_bounds}
We prove first the upper bounds in Items \ref{part1} and \ref{part2},
starting with some necessary preparations. Let $\alpha_1,\ldots,\alpha_t$
be a sequence of $t\ge 2$ distinct positive integers.
The corresponding {\em cycle} $R$ is comprised of
the sequence $(\alpha_1,\ldots,\alpha_t)$ and its cyclic shifts
$(\alpha_j,\ldots,\alpha_t, \alpha_1,\ldots,\alpha_{j-1})$, for every $j=2,\dots,t$. We say,
for $i=1,\ldots,t-1$ that $\alpha_i, \alpha_{i+1}$ are {\em adjacent} in $R$. Also $\alpha_t, \alpha_{1}$ are adjacent in $R$. We denote
the set of (unordered) adjacent pairs in $R$ by $A(R)$. Finally we define $w(R):=\sum_{\alpha,\beta\in A(R)} |\alpha-\beta|$.
\begin{proposition}\label{prop:number_premutatuions} 
Let $\Pi=\{R_1\ldots R_l\}$ be a partition of $[k]$ into cycles
for some even integer $k$. Then  $\sum_{i}w(R_i)\leq \frac{k^2}{2}$. 
The bound is tight.
\end{proposition}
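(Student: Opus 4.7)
The plan is to ``linearize'' each absolute value by writing
\[
|\alpha-\beta| \;=\; \sum_{i=1}^{k-1}\mathbf{1}[\min(\alpha,\beta)\le i < \max(\alpha,\beta)]
\]
and summing over every adjacency $\{\alpha,\beta\}\in\bigcup_j A(R_j)$. This gives $\sum_j w(R_j)=\sum_{i=1}^{k-1} c_i$, where $c_i$ is the number of adjacencies whose two endpoints lie on opposite sides of the cut $[i]\mid [k]\setminus[i]$. So it suffices to bound each $c_i$ separately.

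The heart of the argument is the bound $c_i\le 2\min(i,k-i)$. Fix $i$ and color each element of $[k]$ red or blue according to whether it lies in $[i]$. Within any individual cycle $R_j$, the straddling adjacencies are exactly the color-change steps of the underlying cyclic word; and in any cyclic word with $a$ red and $b$ blue letters the number of color changes is even and at most $2\min(a,b)$. Summing this over $j$ and using $\sum_j|R_j\cap[i]|=i$ together with $\sum_j|R_j\cap([k]\setminus[i])|=k-i$ yields the claimed bound.

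Combining the two steps and evaluating $\sum_{i=1}^{k-1}\min(i,k-i)=k^2/4$ for even $k$ gives $\sum_j w(R_j)\le k^2/2$. For tightness I would exhibit the single cycle $R=(1,\tfrac{k}{2}+1,2,\tfrac{k}{2}+2,\ldots,\tfrac{k}{2},k)$ on $[k]$: its interleaving of $[k/2]$ with $\{k/2+1,\ldots,k\}$ makes each cut be crossed exactly $2\min(i,k-i)$ times, so every inequality in the chain becomes an equality and $w(R)=k^2/2$.

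The only step that needs real (though short) justification is the cyclic color-change bound $c_{R_j}\le 2\min(a,b)$; the remaining parts --- the indicator identity, the double-counting, and the tight construction --- are routine bookkeeping.
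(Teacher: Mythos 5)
Your proof is correct and follows essentially the same route as the paper's: both arguments reduce $\sum_j w(R_j)$ to a sum over the $k-1$ cuts and bound the number of adjacencies crossing the cut at $i$ by $2\min(i,k-i)$ via the observation that each element on the smaller side has only two cyclic neighbours. The only (immaterial) difference is the extremal example --- you use a single Hamiltonian cycle interleaving $[k/2]$ with $\{k/2+1,\dots,k\}$, while the paper uses the $k/2$ two-element cycles $\{i,i+k/2\}$; both attain $k^2/2$.
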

\begin{proof}
We say that a cycle $R$ {\em covers} the interval $[i,i+1]$ if $i, i+1$ lie between $\alpha$ and $\beta$
for some adjacent pair $\alpha,\beta\in A(R)$ (allowing $\{i,i+1\}$
and $\{\alpha,\beta\}$ to have non-empty intersection).
Let $G_i$ be the number of cycles in $\Pi$ that cover the interval $[i,i+1]$. Clearly 
\[\sum_{j}w(R_j) =\sum |G_i|.\]
We turn to bound $\sum |G_i|$.
For $i<\frac{k}{2}$, the adjacent pair $\alpha, \beta$ can cover the interval $[i,i+1]$
only if $\min (\alpha, \beta) \le i$. Every index smaller than or equal to $i$
can contribute a total of $2$ to $|G_i|$, once moving left and once moving right along the cycle. 
Consequently $|G_i|\le 2i$. By symmetry, 
the same argument yields $|G_i|\le 2(k-i)$ when $i>\frac{k}{2}$, and finally
$|G_\frac{k}{2}|\leq k$. We sum these inequalities and conclude that
$$\sum_{i=1}^{l} |G_i| \leq \frac{k(k-2)}{4}+\frac{k(k-2)}{4}+k=\frac{k^2}{2}.$$
Equality is attained e.g., for the partition $R_1\ldots R_{\frac{k}{2}}$ into $\frac{k}{2}$ cycles with $R_i=\{i,i+\frac{k}{2}\}$.
\end{proof}
The following lemma proves the upper bound on $\gamma_2(n)$ in Theorem \ref{thm:k=2}.

\begin{lemma}\label{2_chords_uppe_bound}
Let $G=(V,E)$ be an $n$-vertex $2$-connected cubic graph with Petersen decomposition $E=M\sqcup C$. Then $G$ has a cycle of length $\le \sqrt{2n}+2$ with at most $2$ chords. 
\end{lemma}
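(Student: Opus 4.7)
The plan is to run a breadth-first search from an arbitrary vertex $v$, allowing each BFS-path to use at most one chord of $M$. If the lemma were false, so that every cycle with $\le 2$ chords had length $>g:=\sqrt{2n}+2$, then two distinct such paths of length $\le r:=\lfloor(g-1)/2\rfloor$ could not terminate at the same vertex: their concatenation would yield a closed walk of length $\le 2r\le g-1$ that uses at most two chord edges, each an odd number of times, so the walk cannot be purely a back-and-forth retracing and must contain a simple subcycle carrying $\le 2$ chords and of length $<g$ — contradicting the hypothesis. Hence the BFS reach is honestly tree-like, and comparing its size to $n$ yields the bound.

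Concretely, I would first use the contradiction hypothesis to note two structural facts: every $2$-factor cycle has length $\ge g+1$ (else it is itself a short $0$-chord cycle), and every within-chord has distance $>g-1$ along its $2$-factor cycle (else it closes a $1$-chord cycle of length $\le g$). Fixing $v$ on its $2$-factor cycle $C_\ell$, let
\[
B := \{\,w\in V : \text{some walk } v\to w \text{ of length }\le r \text{ uses at most one chord}\,\}.
\]
The count $|B|$ splits by chord-use. The $0$-chord walks fill a $2r$-arc on $C_\ell$ and contribute exactly $2r+1$ vertices, using $k_\ell\ge g+1>2r+1$. A $1$-chord walk has the form (arc of length $j$ in $C_\ell$) + chord + (arc of length $\le r-1-j$ in the target $2$-factor cycle), with $0\le j\le r-1$; the source at arc-distance $j$ from $v$ on $C_\ell$ can be chosen in $2$ ways (or $1$ way when $j=0$), and each source contributes $2(r-1-j)+1$ endpoints in its target. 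Summing,
\[
|B| \;=\; (2r+1) + (2r-1) + \sum_{j=1}^{r-1} 2\bigl(2(r-1-j)+1\bigr) \;=\; 2r^{2}+2.
\]

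The main thing to verify is that the vertices enumerated in this count are genuinely all distinct. Disjointness of the primary arc from any within-chord secondary ball follows because the chord jumps a distance $>g-1>2r$ along $C_\ell$, strictly exceeding the sum of the two ball radii. Secondary balls living in a $2$-factor cycle different from $C_\ell$ are trivially disjoint from the primary arc, and overlap of two secondary balls in a common target $2$-factor cycle is ruled out by exactly the cycle-extraction argument from the first paragraph. Once disjointness is in hand, $n\ge|B|=2r^{2}+2$; substituting $r=\lfloor(g-1)/2\rfloor$ gives $(g-2)^{2}/2+2\le n$, whence $g\le\sqrt{2n}+2$, as required. I expect the disjointness bookkeeping — with its case split between within-chord endpoints (which return to $C_\ell$) and between-chord endpoints (which land in a foreign $2$-factor cycle) — to be the main, though largely routine, source of friction; everything else is a Moore-type count.
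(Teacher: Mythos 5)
Your argument is correct, and it takes a genuinely different route from the paper's. The paper fixes an arc $A$ of length $k\approx\sqrt{2n}$ on a cycle of $C$, strings the vertices of $A^*$ lying on each $2$-factor cycle into a family of roughly $k$ two-chord cycles, bounds their total length by $n+\tfrac{k^2}{2}+O(k)$ (the $\tfrac{k^2}{2}$ term coming from Proposition \ref{prop:number_premutatuions} on cycle partitions of $[k]$), and then averages. You instead grow a Moore-type ball of radius $r$ from one vertex along walks using at most one chord, and certify that the $2r^2+2$ enumerated endpoints are pairwise distinct: any coincidence yields a closed walk of length $\le 2r$ whose set of odd-multiplicity edges is a nonempty even subgraph (nonempty because the two chords involved are distinct edges of the matching, each traversed once), hence contains a simple cycle of length $\le 2r$ with at most two chords — a contradiction. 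That parity-extraction step is exactly the right way to handle the "retracing" worry, and your within-chord/between-chord case split for disjointness goes through. Both routes give $\sqrt{2n}+2$; yours is more self-contained (no need for Proposition \ref{prop:number_premutatuions}) and makes explicit that this is the natural $\le 2$-chord analogue of the Moore bound, while the paper's averaging argument localizes the short cycle (both chords meet a prescribed short arc), which is the form in which it iterates the idea for $\gamma_4$ and $\gamma_{2l}$. One small point to tidy: since $g=\sqrt{2n}+2$ need not be an integer, $\lfloor(g-1)/2\rfloor\ge(g-2)/2$ is not automatic; work instead with the integer $g_0=\lfloor g\rfloor+1$, the least possible length of a $\le 2$-chord cycle under your hypothesis, and set $r=\lfloor(g_0-1)/2\rfloor\ge(g-2)/2=\sqrt{2n}/2$, which gives $n\ge 2r^2+2\ge n+2$, the desired contradiction.
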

\begin{proof}
Let us mention that actually there always exists
a cycle of length $\le \sqrt{2n}+1$ with at most $2$ chords.
We omit the somewhat laborious details of this stronger argument,
and turn to prove the theorem.\\

If $uv\in M$ is a chord, we write $v^*=u$.
Let $A$ be an arc of length $k\leq \sqrt{2n}$ 
along a cycle $\sigma_0$ in $C$, and let $A^* := \{v^*|v\in A\}$.
If $A^*\cap A \neq \emptyset$ this yields a cycle of length $\le \sqrt{2n}+1$ with only one chord. So,
we may and will assume that $A^*\cap A = \emptyset$.

Let $\sigma\neq\sigma_0$ be another cycle of $C$.
If $|A^*\cap\sigma|=r\ge 2$, let us denote the vertices of 
$A^*\cap\sigma$ by $u_1^*,\ldots,u_r^*$ in cyclic order and with indices taken $\bmod ~r$.
For every $i=1,\ldots,r$ we define the following $2$-chord cycles:
It starts with the chord $u_i, u_i^*$; along $\sigma$
to $u_{i+1}^*$; the chord $u_{i+1}^*, u_{i+1}$; finally along $A$ to $u_i$.\\
When $r=1$ we consider instead the chord-free cycle $\sigma$.\\
This construction applies as well to $\sigma=\sigma_0$, except that the above $2$-chord cycles
are defined only for $i=1,\ldots,r-1$. 

We have mentioned in total either 
$k-1$ or $k$ such $2$-chord cycles depending on whether or not $A^*$ intersects with $\sigma_0$. 

We turn to bound the total length of those $2$-chord cycles, 
starting with their parts that traverse $A$.
The collection of $2$-chord cycles that correspond to any cycle $\sigma$ in $C$ induces a cycle on $A$.
As we go over all $\sigma$
we obtain a partition of $A$ into cycles. By Proposition \ref{prop:number_premutatuions}
the sum total of their lengths does not exceed $\frac{k^2}{2}$. Other than their overlaps
on $A$, the $2$-chord cycles traverse every chord twice and are otherwise overlap-free.

Therefore their total length is at most $\le n+\frac{k^2}{2}+{k-1}$. Consequently,
the average length of such cycle does not exceed
$$\frac{n+\frac{k^2}{2}}{k-1}+1.$$
We optimize and take $k=\sqrt{2n+1}+1$ to conclude that at least one
of these $2$-chord cycles with length  $\le \sqrt{2n}+2$.

\end{proof}

We prove next the upper bound on $\gamma_4(n)$.
\begin{lemma}\label{lem:four_chord_upper}
Let $E=M\sqcup C$ be a Petersen decomposition of an $n$-vertex $2$-connected cubic graph $G=(V,E)$. There is a cycle in $G$ of length $\leq L=\lambda n^{\frac{1}{3}}+3$ with at most four chords, where $\lambda=3\cdot 2^{-\frac{2}{3}}\simeq 1.89$.
\end{lemma}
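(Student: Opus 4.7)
The plan is to extend the arc-based double counting of Lemma~\ref{2_chords_uppe_bound} by one chord step. Fix a cycle $\sigma_0 \in C$ and an arc $A \subset \sigma_0$ of length $k$, a parameter to be optimized. If two distinct elements of $A^* = \{v^*\colon v \in A\}$ lie within distance $m$ on a common cycle of $C$, they already yield a $2$-chord cycle of length at most $k + m + 2$; for the parameters we will pick, this is comfortably below $L$, so I may assume no such pair exists.

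Now, for each $v \in A$, let $B_v$ be the arc of length $j$ around $v^*$ on its cycle, and form
\[
T \;=\; \bigl\{\, y^* : y \in B_v,\ v \in A \,\bigr\}.
\]
Under the no-short-$2$-chord hypothesis the $B_v$'s are pairwise disjoint (else two chord mates would sit within $2j$ on a common cycle, giving a forbidden $2$-chord cycle of length $\le k + 2j + 2$) and $y \mapsto y^*$ is injective, so $|T| = k(2j+1)$. The key observation is: if two elements $y^*, z^* \in T$ with $y \in B_{v_1}$, $z \in B_{v_2}$, $v_1 \neq v_2$, lie at distance $\le m$ on a common cycle of $C$, then
\[
v_1 \to v_1^* \to y \to y^* \to z^* \to z \to v_2^* \to v_2 \to v_1
\]
is a genuine $4$-chord cycle of length at most $k + 2j + m + 4$. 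Otherwise $T$ is $m$-separated on each cycle of $C$, forcing $|T| \le n/m$, and combining with $|T| = k(2j+1)$ produces the desired $4$-chord cycle as soon as $k(2j+1)\,m > n$.

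A straightforward AM-GM on $k + 2j + m$ under the constraint $k \cdot 2j \cdot m \gtrsim n$ yields only $L \gtrsim 3n^{1/3}$. To match the sharp constant $\lambda = 3 \cdot 2^{-2/3} \simeq 1.89$, the plan is to replace the single-collision pigeonhole by an averaging argument in the spirit of Lemma~\ref{2_chords_uppe_bound}: sum the lengths of all $4$-chord cycles produced over close pairs in $T$, use Proposition~\ref{prop:number_premutatuions} to cap the total weight on the outer arc $A$ by $k^2/2$ and on each inner arc $B_v$ by $j^2/2$, bound the remaining traversals (chords used at most twice, the rest of $C$ overlap-free) by essentially $n$, and then divide by the number of cycles produced. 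The balance $k = m = 2j$ equates the three AM-GM terms and yields the claimed constant. The main obstacle is precisely this sharpening: the naive pigeonhole falls short by a factor of $4^{1/3}$, and closing this gap requires a two-level generalization of Proposition~\ref{prop:number_premutatuions}, applied jointly to $A$ and to the family $\{B_v\}_{v\in A}$.
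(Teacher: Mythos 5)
Your setup is exactly the paper's: your outer arc $A$ is its $A_0$, your union $\bigcup_v B_v$ is its $A_1$ (the $r$-neighborhood of $A_0^*$, with $r=j$), your $T$ is its $A_1^*$, and your eight-legged closed walk is the same four-chord cycle. The pigeonhole half of your argument is sound and does yield a cycle of length $3n^{1/3}+O(1)$ with at most four chords, but the lemma asserts $\lambda=3\cdot 2^{-2/3}$, and you explicitly leave the passage from $3$ to $3\cdot 2^{-2/3}$ unresolved as "the main obstacle." As written, the proposal therefore proves a strictly weaker statement, and the step you defer is precisely where the work is.

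The paper closes this gap without any two-level generalization of Proposition \ref{prop:number_premutatuions}. It orders $T=A_1^*$ cyclically along $C$ and builds one four-chord cycle per \emph{consecutive} pair, giving $|T|-1\approx 2jk$ cycles, and then averages, using three separate bounds. (i) The arcs of $C$ joining consecutive elements of $T$ are pairwise disjoint and avoid $A\cup A^*\cup\bigcup_v B_v$, so they sum to at most $n-(2j+2)k$ and average $\approx n/(2jk)$; this plays the role of your $m$. (ii) The inner legs are not cycle-partition weights at all: each runs from a point of $B_v$ to the \emph{fixed center} $v^*$, so their sum over one $B_v$ is $2(1+\cdots+j)=j(j+1)$, and since each leg is shared by two of the cycles the average contribution per cycle is $\approx j$ --- half the worst case $2j$. (iii) Only the legs inside $A$ are handled by Proposition \ref{prop:number_premutatuions}: the projected consecutive pairs form a roughly $4j$-regular multigraph on $A$, which splits into $2j$ cycle partitions, so their total is at most $2j\cdot k^2/2$ and the average is $k/2$ --- again half the worst case. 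The product of the three averages is $\frac{n}{2jk}\cdot j\cdot\frac{k}{2}=n/4$, and AM--GM gives $3(n/4)^{1/3}$; the factor $4^{1/3}$ you are missing is exactly the two halvings in (ii) and (iii). Note also that your proposed caps ($k^2/2$ applied once to $A$, and $j^2/2$ to each $B_v$) are not the right accounting: the former misses the factor of $2j$ layers, and the latter applies a cycle-partition bound to what is really a sum of distances to a single center.
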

\begin{proof}
Arguing by contradiction, we assume  
that all cycles of $C$ are longer than $L$.
We measure distances along $C$. Thus, the $r$-{\em neighborhood} of $v\in V$, is the arc of $2r+1$ vertices centered at $v$ in the cycle of $C$ to which $v$ belongs.
The $r$-neighborhood of $S\subset V$ is the union of the $r$-neighborhoods of all vertices in $S$.  

Let $A_0$ be an arc of length $k\leq \frac{2L}{3}-2$ along a cycle of $C$. We may assume that any two vertices in $A_0^*$ are at distance at least $\ge\frac{L}{3}-1$, 
for otherwise we obtain a cycle of length $<L$ with only two chords.
Let $r=\frac{L}{6}-\frac{1}{2}$.
It follows that $A_1$, the $r$-neighborhood of $A_0^*$ has cardinality $|A_1| = 2kr$. 
Of course $|A_1^*|=|A_1|$. 

We say that $u, v\in A_1^*$ are {\em consecutive} if the shortest arc of $C$ between 
them contains no additional vertices of $A_1^*$. We number the vertices of $A_1^*$ from $1$ to $2rk$
cyclically along $C$, proceeding from a vertex to the
consecutive one. If $u\in A_1^*$ is the $i$-th vertex in this numbering,
we associate with it the following $2$-chords path $P_i$ which starts at $u$ and ends in $A_0$.
This path begins with a hop from $u$ to $u^*\in A_1$. Then comes a walk along
$C$ to the closest vertex in $A_0^*$. Call this closest vertex $\beta_i$ and the distance traveled $b_i$. 
The path ends with a hop from $\beta_i$ to $\beta_i^*\in A_0$.
 
Let $v\in A_1^*$ be the $(i+1)$-st in this order. Consider the following $4$-chord cycle: 
\begin{enumerate}
    \item from $u$ to $v$ along $C$ a distance of $a_i$.
    \item along $P_{i+1}$ to $\beta_{i+1}^*\in A_0$ .
    \item a walk through $A_0$ to $\beta_{i}^*$ a distance of $c_i$.
    \item from ${\beta_{i}^*}$ we traverse $P_{i}$ to $u$, in the direction
    opposite to the above description. 
\end{enumerate}

The length of this cycle is thus $a_i+b_i+c_i+b_{i+1}+4$. There are $|A_1^*|-1=2kr-1$ cycles in this list,
since we exclude the case where $u$ is the $2rk$-th vertex of $A_1^*$ and $v$ is the first. 
The total length of these cycles is
$$\sum_{i=1}^{2rk-1}(a_i+b_i+c_i+b_{i+1}+4)$$
\begin{figure}[!tp]\label{fig:full_illustration}
  \centering
  \includegraphics[scale=0.3]{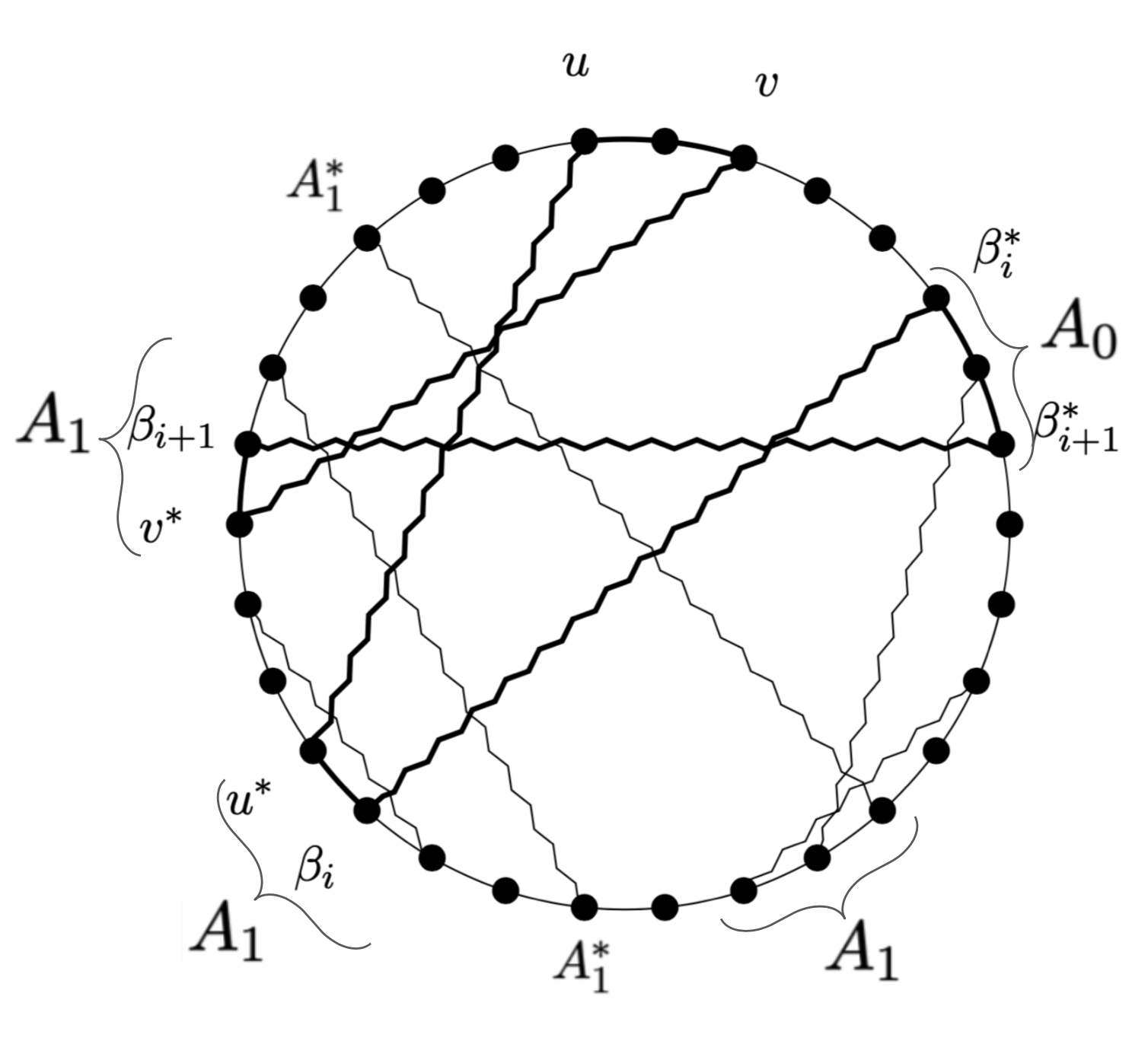}
  \caption{An illustration of the expansion of $A_0$. Zigzag edges represent chords.}
\end{figure}
We turn to bound the different parts of this sum.
The stretches of $a_i$ steps are disjoint from
$A_0,A_1$, and $A_0^*$ hence $\sum_{i=1}^{2rk-1}a_i\leq n-|A_0|-|A_0^*|-|A_1| =n-2k-2kr$. Consider
next the stretches of walks that go from a vertex in $A_1$ to the closest vertex in $A_0^*$.
For every $\beta_{i}\in A_0^*$ the sum of the distances of all vertices
in the $r$-neighborhood of $\beta_i$ is $r^2$.
It follows that $\sum_{i=1}^{2kr-1}b_i+\sum_{i=2}^{2kr}b_i \le 2\sum_{i=1}^{2kr} b_i=2k r(r+1)$.
Finally we turn to bound $\sum c_i$. We argue here in the same way that we did in the proof of 
Lemma \ref{2_chords_uppe_bound}, except that here the argument is carried out $2r$ times.
Consequently, $\sum_{i=1}^{2rk-1} c_i\leq 2r(\frac{k^2}{2})$.

Thus the sum total of the cycles' lengths is at most $n-2k-2kr+2k\cdot r(r+1)+rk^2+4(2kr)$.
This gives the following upper bound on their average length
$\leq \frac{n}{2rk-1}+r+\frac{k}{2}+4$.
We get the tightest upper bound on the length by letting $k=\sqrt{\frac{n}{r}}$ 
and $r=\frac{{(2n)}^{1/3}}{2}$. We conclude that there is at least one cycle of length
$\le 3(\frac{n}{4})^\frac{1}{3}+4$.
\end{proof}
We finally present an upper bound on $\gamma_k(n)$ for
every even $k$. This yields an upper bound also for the case of odd $k$,
since $\gamma_k(n)$ is a decreasing function of $k$. 
The proof is just an adaptation of the upper bound proof for $\gamma_4(n)$.

\begin{lemma}
Let $E=M\sqcup C$ be a Petersen decomposition of an $n$-vertex $2$-connected cubic graph $G=(V,E)$. There is a cycle in $G$ of length $\le 3l+(l-1)(2^{-\frac{l}{l+1}}n^{\frac{1}{l+1}})+n^{\frac{1}{l+1}}2^{\frac{1}{l+1}}$ with at most $2l$ chords.
\end{lemma}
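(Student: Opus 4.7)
The plan is to iterate the neighborhood-expansion construction of Lemma \ref{lem:four_chord_upper} a total of $l-1$ times, producing a family of $2l$-chord cycles whose average length is controlled by a telescoping argument. Arguing by contradiction, assume every cycle with at most $2l$ chords has length exceeding the target bound $L = 3l + (l-1)\cdot 2^{-l/(l+1)} n^{1/(l+1)} + 2^{1/(l+1)} n^{1/(l+1)}$. In particular every cycle of $C$ is longer than $L$, and various near-collisions between the layers we are about to build are ruled out (each would yield a shorter cycle with strictly fewer than $2l$ chords). Fix parameters $k$ and $r$ to be chosen at the end, pick an arc $A_0$ of length $k$ along some cycle of $C$, and inductively define $A_j$ to be the $r$-neighborhood in $C$ of $A_{j-1}^*$ for $j = 1, \dots, l-1$. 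The contradiction hypothesis ensures that the $A_j$'s and $A_j^*$'s are pairwise disjoint and that $|A_j| = k(2r)^j$.

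For every consecutive pair $u,v \in A_{l-1}^*$ in the cyclic order along $C$, construct a $2l$-chord cycle as follows: traverse $C$ from $u$ to $v$ (the top arc of length $a_i$); descend from $v$ through the layers $A_{l-1}\to A_{l-2}^*\to A_{l-2}\to\cdots\to A_0^*\to A_0$, alternating a chord with a walk of length at most $r$ at each intermediate level; walk along $A_0$ (length $c_i$) to the landing point of $u$'s descent; then ascend symmetrically back to $u$. This family contains $N = k(2r)^{l-1} - 1$ cycles, each using exactly $2l$ chords. The total length decomposes into four contributions. First, $\sum_i a_i \le n$, since the top arcs are disjoint stretches of $C$ lying outside every $A_j,A_j^*$. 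Second, for each fixed level $j\in\{1,\dots,l-1\}$, the sum $\sum_{w\in A_{l-1}^*} b_w^{(j)}$ of the level-$j$ walks equals $k(2r)^{l-2}r(r+1)$ independently of $j$: a vertex of $A_{j-1}^*$ contributes $r(r+1)$ across its $2r$ immediate children in $A_j$, and each such child is shared by exactly $(2r)^{l-1-j}$ descendants in $A_{l-1}^*$. Summing over the $l-1$ levels and doubling (each descent path is traversed in two of the cycles) gives a bound of $2(l-1)k(2r)^{l-2}r(r+1)$. Third, $\sum_i c_i \le (2r)^{l-1}\cdot k^2/2$, obtained by applying Proposition \ref{prop:number_premutatuions} to the $(2r)^{l-1}$-fold fibration $A_{l-1}^*\to A_0^*$ in direct analogy with the ``$2r$ times'' step used in Lemma \ref{lem:four_chord_upper}. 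Fourth, the chord contribution is exactly $2lN$.

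Dividing by $N$ and using $N\approx k(2r)^{l-1}$ yields an average length of at most $n/(k(2r)^{l-1}) + (l-1)(r+1) + k/2 + 2l$. Choosing $k = (2n)^{1/(l+1)}$ and $r = k/2 = 2^{-l/(l+1)}n^{1/(l+1)}$ makes the first summand collapse to exactly $r$, so the sum of the three size-dependent terms becomes $(l+1)r + k/2 = (l+1)\cdot 2^{-l/(l+1)} n^{1/(l+1)}$, which using the identity $2^{1/(l+1)} = 2\cdot 2^{-l/(l+1)}$ can be rewritten as $(l-1)\cdot 2^{-l/(l+1)} n^{1/(l+1)} + 2^{1/(l+1)}n^{1/(l+1)}$. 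Combined with the $+2l$ and routine integer-rounding slack (which is swept into the constant to match $3l$), this contradicts the assumption and delivers the claimed cycle. The main technical obstacle is the multi-level bookkeeping that justifies the level-independence of $\sum_w b_w^{(j)}$ and the factorization of the $A_0$-walk bound through $(2r)^{l-1}$ canonical fibers; once these combinatorial identities are established, the optimization is routine and the rest of the argument is a direct lift of the $l=2$ case proved in Lemma \ref{lem:four_chord_upper}.
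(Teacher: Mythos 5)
Your proposal follows essentially the same route as the paper's own (sketched) proof: iterate the layered neighborhood-expansion of Lemma \ref{lem:four_chord_upper} through $l-1$ levels, sum the $a$, $b$, $c$ and chord contributions over the $k(2r)^{l-1}-1$ resulting $2l$-chord cycles, and optimize with $k=(2n)^{1/(l+1)}$, $r=k/2$, which matches the paper's parameter choice exactly. Your explicit bookkeeping for the level-independence of the $b$-sums and the fibered application of Proposition \ref{prop:number_premutatuions} is correct and in fact somewhat more detailed than the paper's sketch.
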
\label{lm:lower_bound_on_k}
\begin{proof}
We only sketch the proof, since it is similar to the proof of lemma \ref{lem:four_chord_upper}.
Let $A_0$ be an arc of length $k<\frac{1}{2}(l+1)(n^{\frac{1}{l+1}})$. For every $0<i\le l-1$, we
let $A_i$ be the $r$-neighborhood of $A_{i-1}^*$. It follows that $A_{i}$ has cardinality $|A_i|=(2r)^{i}k$.
We number the vertices of $A_{l-1}^*$ from $1$ to $(2r)^{l-1}k$ cyclically along $C$. 
Let $u\in A_{l-1}^*$ be the $i$-th vertex in this numbering.
We associate with it the following $l$-chords path $P_{i}$ which starts at $u$ and ends in $A_0$. 
It begins with a hop from $u$ to $u^*\in A_{l-1}$ then a walk along $C$ to the closest vertex in $A_{l-2}^*$ call it $\beta_{l-2}$ a distance of $b^i_{l-2}$ then a hop to $\beta_{l-2}^*$ and in general from vertex $\beta_{j}^*\in A_{j-1}$ a walk along $C$ to the closest a distance $b_i^{j}$ to a vertex $\beta_{j-1}\in A^*_{j-2}$. 
Let $v\in A_{l-1}^*$ be the ($i+1$)-st in this order and consider the following $2l$-chord cycle:
\begin{enumerate}
    \item from $u$ to $v$ along $C$ a distance $a_i$.
    \item along $P_{i+1}$ to $\beta^{i+1*}\in A_0$.
    \item a walk along $A_0$ to $\beta^{i*}_{0}$ distance $c_i$.
    \item from $\beta^{i*}_{0}$ we traverse $P_i$ to $u$ in the opposite direction of the path.
\end{enumerate}
The length of this cycle is thus $a_i+\sum_{j=1}^{l-1} b^{j}_{i}+b^{j}_{i+1}+c_{i}+2l$. There are $|A_{l-1}|^*-1=(2r)^{l-1}k-1$ cycles to consider. 
The total length is  $\sum^{(2r)^{l-1}k-1}_{i=1}(a_i+c_i+\sum_{j=1}^{l-1} b^{j}_{i}+b^{j}_{i+1}+2l)$, 
we bound all the steps as we did in proving Lemma \ref{lem:four_chord_upper}. This
yields that $\frac{n}{(2r)^{l-1}k-1}+(l-1)r+\frac{k}{2}+3l-2$
is an upper bound on their average length, and this for any two integers $r,k\ge 1$. 
The optimal choice is $k=\sqrt{\frac{2n}{(2r)^{l-1}}}$ and $r=\frac{(2n)^{\frac{1}{l+1}}}{2}$. 
With this choice we conclude that there is at least one cycle of length $\le 3l+(l-1)(2^{-\frac{l}{l+1}}n^{\frac{1}{l+1}})+n^{\frac{1}{l+1}}2^{\frac{1}{l+1}}$
\end{proof}

\section{Lower bounds}\label{s:lower_bounds}
We prove the lower bound on $\gamma_2(n)$ 
by providing an explicit construction. In the Petersen Decomposition of our
graph, $C$ is a Hamilton cycle.

\begin{lemma}\label{lem:construction_with_2_chords}
$\gamma_2(n) \ge\sqrt{(2n+\frac 94)}+\frac 12$
whenever $n=8l^2+6l$ with $l$ a positive integer. Namely, we construct
a cubic $n$-vertex graph $G=(V,E)$ with a Petersen Partition 
$E=M\sqcup C$, where $C$ is a Hamilton cycle and
where the shortest cycle with $2$ chords or less has length $4l+2$.
\end{lemma}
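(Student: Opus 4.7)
The plan is to exhibit an explicit cubic $n$-vertex graph $G$ on $n=8l^2+6l$ vertices, together with a Hamilton cycle $C$ and a perfect matching $M$, such that every cycle with at most two chords has length at least $4l+2$. Since $\sqrt{2n+\tfrac94}+\tfrac12=4l+2$ for this value of $n$, producing such a $G$ directly establishes the claimed bound.

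I take $V=\mathbb{Z}_n$ with $C$ the standard Hamilton cycle $0\to 1\to\cdots\to n-1\to 0$, and exploit the factorization $n=2l(4l+3)$. I partition $V$ into the $2l$ residue classes $R_0,\ldots,R_{2l-1}$ modulo $2l$, each of size $4l+3$, and pair them up as $(R_i,R_{i+l})$ for $i=0,\ldots,l-1$. Within each pair, I define $M$ by the translation $v\mapsto v+c_i\pmod n$ with
\[
c_i=(4i+5)\,l,\qquad i=0,1,\ldots,l-1.
\]
Since $c_i\equiv l\pmod{2l}$, this really maps $R_i$ to $R_{i+l}$; and since $c_i\in[5l,\,4l^2+l]\subseteq[4l+1,\,n-4l-1]$, every chord has shorter $C$-arc of length at least $4l+1$, so every $1$-chord cycle has length at least $4l+2$. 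By construction $G$ is cubic with $C\sqcup M$ a Petersen decomposition.

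The main work is bounding the length of any $2$-chord cycle. For two chords $\{v,v+c_i\}$ and $\{w,w+c_j\}$, their four endpoints cut $C$ into arcs $x_1,x_2,x_3,x_4$ in cyclic order, and the $2$-chord cycles have length $2+x_1+x_3$ or $2+x_2+x_4$ (interleaving), or just one of these (nested). I split into (A) same-pair chords ($i=j$) and (B) cross-pair chords ($i\neq j$). For (A) the displacement $\delta=w-v\bmod n$ is a nonzero multiple of $2l$, and three sub-cases arise according to whether $\delta<c_i$, $c_i\le\delta\le n-c_i$, or $\delta>n-c_i$; in each I would verify by direct arithmetic that the minimum $2$-chord cycle length is at least $4l+2$, with equality attained on the boundary (for instance at $\delta=2l$ in the first sub-case, and at $\delta=n-2l$ in the wrap-around sub-case). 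For (B) the four endpoints lie in four distinct residue classes mod $2l$, and I would show by case analysis on the cyclic order of the endpoints that the arithmetic progression $c_j-c_i=4(j-i)l$ forces at least one of the two candidate arc-sums to be $\ge 4l$.

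The main obstacle is case (B): unlike the same-pair case, which is essentially one-parameter, the cross-pair configurations depend on both $\delta$ and the residue difference $j-i$, and require inspecting every cyclic order of the four endpoints. The specific choice $c_i=(4i+5)l$ is essential---alternative choices such as $c_i=n/2$ for all $i$ immediately produce $4$-cycles from chords $\{v,v+n/2\}$ and $\{v+1,v+1+n/2\}$ across pairs---so the proof must confirm that this progression, with step $4l$, avoids all analogous bad configurations. Since $n=8l^2+6l$ is the threshold value for $\gamma_2(n)\ge 4l+2$, the bound is attained with equality in several tight configurations (both within pairs and across pairs), leaving no slack in the verification.
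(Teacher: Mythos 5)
Your construction is sound, and it is structurally the same as the paper's: a matching invariant under rotation by $2l$, with chord offsets forming an arithmetic progression of common difference $4l$ (the paper pairs the residue class of $2j$ with that of $2j+1$ via offsets $a_j=4l(j+1)+1$; you pair $R_i$ with $R_{i+l}$ via $c_i=(4i+5)l$). The setup is correct: $M$ is a perfect matching because translation by $c_i$ is a bijection $R_i\to R_{i+l}$ and the target classes $R_l,\ldots,R_{2l-1}$ are distinct, and your $1$-chord bound is right. But as written this is not a proof. The entire content of the lemma is the verification that every $2$-chord cycle has length at least $4l+2$, and you only announce it ("I would verify by direct arithmetic", "I would show by case analysis"), explicitly flagging the cross-pair case (B) as an unresolved obstacle. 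Until that case analysis is actually carried out, the lower bound is unestablished.

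The gap is fillable, and in fact far more cleanly than your plan suggests; no inspection of cyclic orders is needed. Writing $\|x\|:=\min\{x \bmod n,\, n-(x\bmod n)\}$ for the cyclic distance and $\delta=v-w$, every $2$-chord cycle on chords $\{v,v+c_i\}$, $\{w,w+c_j\}$ has length at least $2+\|\delta\|+\|\delta+c_i-c_j\|$ or $2+\|\delta-c_j\|+\|\delta+c_i\|$. Since $\|\cdot\|$ is a metric, $\|a\|+\|b\|\ge\|a-b\|$, so the two sums are at least $\|c_i-c_j\|$ and $\|c_i+c_j\|$ respectively (and in the same-pair case $i=j$ the first sum is $2\|\delta\|\ge 4l$ because $\delta$ is a nonzero multiple of $2l$). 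One then checks directly from $c_i=(4i+5)l$ and $n=8l^2+6l$ that $\|c_i+c_j\|\ge 8l$, $\|2c_i\|\ge 4l$, and $\|c_i-c_j\|=\|4l(i-j)\|\ge 4l$ for $i\ne j$, which closes every case. This is actually slicker than the paper's own verification, which goes through an explicit sign analysis of the quantities $A_1,A_2,B_1,B_2$; but you must include some such argument for the lemma to be proved.
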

\begin{proof}
All the indices mentioned below are taken $\bmod ~n$. The vertices of
$C$ are numbered $0,\ldots, n-1$ in this order.  
Note that $n$ is divisible by $2l$. We divide $C$ into $n/2l$
equal-length {\em blocks} and construct
a graph that is invariant under rotation by $2l$. The first block is comprised of vertices $0,\ldots,2l-1$.
Let $A = (a_0,a_1...a_{l-1})$ be a sequence of $l$ odd integers. For every $l>j\ge 0$, we introduce the chord $(2j, 2j+a_j)$, and, as mentioned, we rotate these chords with steps of $2l$. Stated differently, we (uniquely) express every even integer $x$ in the range $0,\ldots,n-1$ as $x=2lk+2j$, where $
0 \le k<\frac{n}{2l}$, $0\le j<l$ and connect vertex $x$ by a chord to $x+a_j$. 
Since the integers $a_{i}$ are all odd, every chord connects a vertex of even index to one of odd index. Also, every even vertex is incident with exactly one chord. In order for $G$ to be cubic, also every odd vertex must be incident with a single chord. Namely, we need to choose the $a_i$ so that
\begin{equation}\label{eq:simple_matching}
2lk+2j+a_j=2lr+2s+a_s \Rightarrow k=r, j=s \text{~for every~}0 \le k, r<\frac{n}{2l},~ 0\le j, s<l.
\end{equation}

In order that every cycle with exactly one chord has length $\geq g = 4l+2$ we must satisfy
\begin{equation}\label{eq:cycle_one_chord}n-g+1\geq a_j\geq g-1 \text{~for every~} 0\leq j \leq l-1 \end{equation} 

We choose the parameters so that $a_i \equiv a_j \bmod 2l$ for all $i,j=0,\ldots,l-1$. This yields
Condition (\ref{eq:simple_matching}) by considering the equations $\bmod~ 2l$.
Concretely, we let $A$ be the arithmetic progression $a_j = 4l(j+1)+1$.
Condition (\ref{eq:cycle_one_chord}) is easily seen to hold.

Consider the shortest $2$-chord cycle with chords
\[(2kl+2j, 2kl+2j+a_j) \text{~and~} (2rl+2s, 2rl+2s+a_s).\]
We need to show that its length is $\ge g$. There are two cases to consider, as the arc of this cycle
which starts at $2kl+2j$ can end at either $2rl+2s$ or $2rl+2s+a_s$. Let us introduce the notation $\|x\|:=\min \{x,n-x\}$ for an integer $n-1\ge x \ge 0$. We extend the definition to all integers $x$ by first taking the residue of $x \bmod n$. Using this terminology,
we need to show that if $(k,j)\neq (r,s)$, then
\[\|2kl+2j-(2rl+2s)\|+\|2kl+2j+a_j-(2rl+2s+a_s)\|\ge g-2\]
and
\[\|2kl+2j-(2rl+2s+a_s)\|+\|2kl+2j+a_j-(2rl+2s)\|\ge g-2.\]

Due to the cyclic symmetry of our construction no generality is lost if we assume that $r=0$ and $0\le k < 2l+2$. We also spell out the values of $a_j, a_s$ and $g$ and now we have to show that if $k\neq 0$ or $j\neq s$, then
\begin{equation}\label{eq:cycle_two_even_chords}\|2kl+2(j-s)\|+\|2kl+(4l-2)(j-s)\|\ge 4l \end{equation} 
and
\begin{equation}\label{eq:cycle_two_chords_odd_even}\|2l(k-2s-2)+2(j-s)-1\|+\|2l(k+2j+2)+2(j-s)+1\|\ge 4l.\end{equation}
The inequality $\|x\|\ge y$ for an integer $n>x\ge 0$ and a positive integer $y$ is equivalent to the conjunction of the inequalities $x\ge y$ and $n-x \ge y$. For negative arguments we use the fact that $\|-x\|=\|x\|$, then apply the above. Let us refer to the left hand side of Equation (\ref{eq:cycle_two_even_chords}) as $\|A_1\|+\|A_2\|$ and to Equation (\ref{eq:cycle_two_chords_odd_even}) as $\|B_1\|+\|B_2\|$. Note that $n-B_1\ge n-A_1\ge 4l$. It suffices therefore to prove that $|A_1|+\|A_2\|\ge 4l$ and $|B_1|+\|B_2\|\ge 4l$. 

We start with the case $k=0$, which implies $j\neq s$. In Equation (\ref{eq:cycle_two_even_chords}) this yields $|A_1|\ge 2$ and $n-4l>4l^2-6l+2\ge |A_2|\ge 4l-2$ hence $|A_1|+\|A_2\|\ge 4l$. In Equation (\ref{eq:cycle_two_chords_odd_even}) when $k=0$ either $j\ge 1$ and then $n-4l>4l^2+2l-1>|B_2|\ge 4l$ or $s\ge 1$ and then $|B_1|\ge 4l$ hence $|B_1|+\|B_2\|\ge 4l$. 

We turn our attention to Equation (\ref{eq:cycle_two_even_chords}) with $k\ge 1$. 
It is easy to see that $A_1>4l$ when $k\ge 3$. On the other hand, $A_2>n-4l$ when $k\leq 2$. It remains to show that $|A_1|+|A_2|\ge 4l$ when $k\in\{1,2\}$. This is done in the following case analysis.
\begin{itemize}
\item 
There holds $A_1<0$ exactly when $s>kl+j$. But then $A_2<0$ as well and we get $|A_1|+|A_2|=-A_1-A_2 = 4l(s-k-j)$.
Clearly $s>kl+j\ge k+j$ so that $|A_1|+|A_2|\ge 4l$ as claimed.
\item
When $A_1>0>A_2$, there holds $|A_1|+|A_2|=A_1-A_2=(4l-4)(s-j)$. We are in this range exactly when when $\frac{kl}{2l-1}<s-j<kl$, in this case we would compute and $A_1-A_2\geq 4l$, because $j+1<s$, and $k\ge 1$. 
\item
In the last remaining case $A_1,A_2>0$. Here $s<\frac{kl}{2l-1}+j$ and $A_1+A_2 =4l(k+j-s)>4l$.
\end{itemize}
Inequality  (\ref{eq:cycle_two_even_chords}) follows.

We proceed to Equation (\ref{eq:cycle_two_chords_odd_even}). Note that $B_2>0$. Also, $B_2\le 4l$ only when $k=j=0$ in this case $|B_1| = |-4ls-4l-2s-1|>4l$. Hence we are left to consider the case where $\|B_2\|=n-B_2$, the only case when $n-B_2<4l$ is when $k=2l+1$ and $j=l-1$ but then $|B_1|>4l$. 
Inequality  (\ref{eq:cycle_two_chords_odd_even}) follows.
With this we can conclude that the construction is valid.

\end{proof}

\begin{rem}
For $l=1$, this construction yields the Heawood graph which is a Moore graph.
\end{rem}

We proceed to bound $\gamma_3(n)$ by giving an appropriate explicit construction.

The main idea of the construction is to transform the graph from Lemma \ref{lem:construction_with_2_chords} into a "bipartite" version. 
This relies on the fact that in the original construction $n$ is even and every chord connects a vertex of even index to one with an odd index. 
In this construction no cycle has exactly $3$ chords, while the length of every $2$-chord cycle in the original graph is only cut in half at worst.

\begin{lemma}
$\gamma_3(n) \ge\frac 12\sqrt{(2n+\frac {9}{4})}+\frac 54$
for every $n=8l^2+6l$ with $l$ a positive integer. Namely, we construct
a cubic $n$-vertex graph $G=(V,E)$ with a Petersen Partition, where the shortest cycle 
with $3$ chords or less has length $2l+2$.
\end{lemma}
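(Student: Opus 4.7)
The plan is to modify the graph from Lemma~\ref{lem:construction_with_2_chords} by replacing only its $2$-factor while keeping the perfect matching $M$ unchanged. I would work on the same vertex set $V=\{0,1,\ldots,n-1\}$ with the same chords $(2lk+2j,\,2lk+2j+a_j)$ (where $a_j=4l(j+1)+1$), but in place of the Hamilton cycle $C=(0,1,\ldots,n-1)$ I would use the $2$-factor $C':=C_{\text{even}}\sqcup C_{\text{odd}}$, where $C_{\text{even}}=(0,2,4,\ldots,n-2)$ and $C_{\text{odd}}=(1,3,\ldots,n-1)$ are disjoint cycles of length $n/2$ each. Since every $a_j$ is odd, each chord joins an even-indexed to an odd-indexed vertex, so $M$ and $C'$ are edge-disjoint and $G':=(V,M\cup C')$ is a cubic graph with a valid Petersen decomposition.

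The first key step is a structural claim: every cycle of $G'$ uses an even number of chords. The two components of $C'$ lie on disjoint vertex sets (the even and odd indices), and every chord of $M$ crosses between them, so any closed walk must traverse chords an even number of times. This immediately rules out cycles with exactly $1$ or $3$ chords, leaving only $0$- and $2$-chord cycles to check. The $0$-chord cycles are just $C_{\text{even}}$ and $C_{\text{odd}}$ themselves, each of length $n/2=4l^2+3l\ge 2l+2$ for $l\ge 1$.

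The main calculation is for $2$-chord cycles. Any such cycle in $G'$ consists of two chords $(u_1,v_1),(u_2,v_2)$ (with $u_i$ even and $v_i$ odd) joined by an arc of $C_{\text{even}}$ from $u_1$ to $u_2$ and an arc of $C_{\text{odd}}$ from $v_1$ to $v_2$. Because $C_{\text{even}}$ traverses the even-indexed vertices in steps of two along $C$, the cyclic distance from $u_1$ to $u_2$ in $C_{\text{even}}$ is exactly $\tfrac12\|u_1-u_2\|$, where $\|\cdot\|$ denotes cyclic distance in $C$; an analogous identity holds in $C_{\text{odd}}$. Hence the cycle length equals $2+\tfrac12\bigl(\|u_1-u_2\|+\|v_1-v_2\|\bigr)$. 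Invoking the bound from the proof of Lemma~\ref{lem:construction_with_2_chords}, specifically inequality~\eqref{eq:cycle_two_even_chords} — which handles exactly the ``same-parity endpoint pairs'' case — the sum $\|u_1-u_2\|+\|v_1-v_2\|$ is at least $4l$ for any two distinct chords, so every $2$-chord cycle in $G'$ has length at least $2+2l=2l+2$.

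Combining the three cases would yield that every cycle in $G'$ with at most $3$ chords has length $\ge 2l+2$, matching $\tfrac12\sqrt{2n+\tfrac94}+\tfrac54$ when $n=8l^2+6l$. There is no serious obstacle here: the heavy lifting is already done by Lemma~\ref{lem:construction_with_2_chords}, and the only novelty is the observation that ``opening up'' the Hamilton cycle along its natural even/odd bipartition simultaneously halves every same-parity cyclic distance and forces every cycle to use an even number of chords. The one point to verify carefully is the halving identity for cyclic distances in $C_{\text{even}}$ and $C_{\text{odd}}$, and that the parities make $\|u_1-u_2\|$ and $\|v_1-v_2\|$ even so the computed lengths are integers; both are direct.
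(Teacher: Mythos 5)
Your proposal is correct and follows essentially the same route as the paper: keep the matching $M$, split the Hamilton cycle into $C_{\text{even}}\sqcup C_{\text{odd}}$, use the parity of chords to exclude odd numbers of chords, and reduce the $2$-chord case to the bound already established in Lemma~\ref{lem:construction_with_2_chords} (your explicit halving identity and the identification of inequality~\eqref{eq:cycle_two_even_chords} as the relevant case is just a more detailed rendering of the paper's ``$\text{length}(\tilde{\mathcal C})=2\,\text{length}(\mathcal C)-2$'' correspondence).
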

\begin{proof}
The graph that we construct has the same vertex set $V$ and the same set of chords $M$ as in the graph $G$
of Lemma \ref{lem:construction_with_2_chords}. What changes is the $2$-factor $C$ of the Petersen Partition.
In the original construction $C$ was a Hamilton Cycle that traverses the vertices in order. 
In the present construction $C$ is the disjoint union of two cycles, $C_{even}$ and $C_{odd}$
that traverse all the even-indexed resp.\ odd vertices in order.

It is clear that $H$ is cubic, since every vertex is incident to exactly one chord and has two neighbors in the $2$-factor in which it resides.
As shown in Lemma \ref{lem:construction_with_2_chords}, every chord connects an even-indexed vertex to an odd one. 
It follows that every cycle in $H$ has an even number of chords.
Therefore we only need to consider cycles with $2$ chords. 
Also, the only chordless cycles are $C_{odd},C_{even}$ whose length is $\frac{n}{2}\gg 2l+2$. 

Finally we come to cycles $\cal{C}$ with exactly $2$ chords in $H$. Such a cycle is composed of an arc from $v_1$ to $v_2$ in $C_{odd}$, a chord $v_2 v_3$ to $v_3\in C_{even}$, an arc
from $v_3$ to $v_4$ in $C_{even}$ and a chord $v_4 v_1$ back to $C_{odd}$.
We can consider the cycle $\Tilde{\cal{C}}$ in $G$ that traverses $C$ from $v_1$ to $v_2$, takes the chord $v_2 v_3$, then traverses $C$ from $v_3$ to $v_4$ and finally the chord $v_4 v_1$.
This is a $2$-chord cycle in $G$. and 
$\text{length}(\tilde{\cal{C}})=2\cdot\text{length}(\cal{C})\rm-2$. 
As Lemma \ref{lem:construction_with_2_chords} shows
$\text{length}(\tilde{\cal{C}})\ge 4l-2$. The conclusion follows.
\end{proof}

The lower bounds on $\gamma_4,\gamma_5,\gamma_7,\gamma_{11}$ are proved by an explicit construction as well.
We start with a general construction method from which these claims easily follow

\begin{lemma}\label{lm:lower_bound_k_better}
Given an $n$-vertex $d$-regular graph $H=(V',E')$ with girth $m$, there is an explicit 
$2nd$-vertex cubic graph $G=(V,E)$ with Petersen Partition $M\sqcup C$, where
the shortest length of a cycle in $G$ with at most $m-1$ chords is $2d$.
\end{lemma}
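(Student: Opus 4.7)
The plan is to ``blow up'' each vertex of $H$ into a $2d$-cycle in $C$, with chord endpoints placed so that the two chords inherited from a common $H$-edge land on antipodal vertices. Concretely, for each $v \in V(H)$ let $C_v$ be the cycle on the $2d$ vertices $(v,i,\sigma)$ with $i \in [d]$, $\sigma \in \{1,2\}$, in cyclic order $(v,1,1),(v,2,1),\dots,(v,d,1),(v,1,2),(v,2,2),\dots,(v,d,2)$, so that $(v,i,1)$ and $(v,i,2)$ are antipodal. Fix an arbitrary neighbor-ordering at each vertex of $H$; for each $\{u,v\}\in E(H)$, with $v$ the $a$-th neighbor of $u$ and $u$ the $b$-th neighbor of $v$, include the two chords $\{(u,a,1),(v,b,1)\}$ and $\{(u,a,2),(v,b,2)\}$. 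A degree count shows $G$ is cubic on $2nd$ vertices, and $E=M\sqcup C$ is a Petersen partition.

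The analysis proceeds by projection. Any cycle $\mathcal{C}$ of $G$ with $k$ chords projects to a closed walk $W$ in $H$ of length $k$, by collapsing each maximal arc inside some $C_v$ to the vertex $v$ and each chord to its corresponding $H$-edge. If $k=0$ then $\mathcal{C}=C_v$, a cycle of length exactly $2d$. Suppose instead $1\le k\le m-1$. Because $k$ is strictly below the girth of $H$, the edges used by $W$ form a connected acyclic subgraph, i.e., a tree $T$; a closed walk on a tree must use every edge an even number of times (cut argument across any edge), and since each $H$-edge corresponds to only two chords of $G$ (each usable at most once in the simple cycle $\mathcal{C}$), every edge of $T$ is used exactly twice. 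Hence $k$ is even, $T$ has $k/2$ edges, and $W$ is an Euler tour of $T$ in which $\mathcal{C}$ uses both chords of every $T$-edge.

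The length bound then comes from the leaves of $T$. A tree with $k/2\ge 1$ edges has at least two leaves; at any leaf $u$, the tour $W$ visits $u$ exactly once, so $\mathcal{C}$ contains a single arc in $C_u$, whose two endpoints are precisely the two chord endpoints in $C_u$ tagged with the unique $T$-edge at $u$. By the antipodal placement these two endpoints lie at distance $d$ in $C_u$, so this arc has length $d$. Summing over the (at least) two leaves of $T$ contributes at least $2d$ to $|\mathcal{C}|$, and adding the $k\ge 2$ chord edges yields $|\mathcal{C}|\ge 2d+k>2d$. Combined with the chordless $C_v$'s of length $2d$, the shortest cycle of $G$ with at most $m-1$ chords has length exactly $2d$. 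The only real design choice is the antipodal pairing of the two chord endpoints per $H$-edge: at internal vertices of $T$ the non-crossing matching of $2\deg_T(v)$ chord endpoints could force arcs as short as a single edge, so the whole $2d$ must be delivered by the leaves, and the antipodal placement is exactly what makes that automatic.
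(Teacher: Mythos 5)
Your proof is correct and uses essentially the same construction and idea as the paper: blow each vertex of $H$ up into a $2d$-cycle, place the two chords coming from each $H$-edge on antipodal pairs, and observe that a cycle with fewer than $\mathrm{girth}(H)$ chords must either be one of the $2d$-cycles or use both chords of each projected edge. Your tree-projection and leaf argument is in fact a more rigorous filling-in of the paper's rather terse justification of why such a cycle has length at least $2d+2$.
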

\begin{proof}
To construct $G$, associate with every vertex v in H a $2d$-cycle $C^{(v)}$.
The $2$-factor $C$ of $G$’s Petersen’s decomposition is the union of $C^{(v)}$ over all $v\in V'$. 
For every edge $v_1v_2\in E'$ we introduce two edges: One between a vertex $v\in C^{(v_1)}$ and a vertex $u\in C^{(v_2)}$. 
The second edge is $u'v'$, where $v'$ the antipode of $v$ in $C^{(v_1)}$ and $u'$ is the antipode of $u$ in $C^{(v_2)}$.
Since girth$(H)=m$, a cycle in $G$ with fewer than $m$ chords can be either one of the $2d$-cycles $C^{(v)}$ or 
it must include some pairs of antipodal vertices, as described above. 
However, in the latter case it must include both chords $uv$ and $u'v'$, and hence 
its length must be at least $2d+2$.
\end{proof}

\begin{corollary}
\[\gamma_4(n)\ge \gamma_5(n) \ge 2(\frac{n}{4})^{1/3}+O(1),\] 
\[\gamma_7(n) \ge 2(\frac{n}{4})^{1/4}+O(1)\] and
\[\gamma_{11}(n) \ge 2(\frac{n}{4})^{1/6}+O(1).\]
\end{corollary}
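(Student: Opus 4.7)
My plan is to apply Lemma \ref{lm:lower_bound_k_better} three times, each time plugging in a different classical extremal bipartite graph coming from finite incidence geometry. Since $\gamma_k$ is non-increasing in $k$, it suffices to prove the strongest version of each stated inequality, namely the bounds on $\gamma_5$, $\gamma_7$, and $\gamma_{11}$. In each case I want a $d$-regular graph $H$ of girth $m$ where $m-1$ is the relevant chord count (so $m = 6, 8, 12$), and I want $|V(H)|$ as small as possible so that the eventual bound $2d$ on cycle length is as large as possible in terms of $N = 2|V(H)|d$, the size of the cubic graph produced by the lemma.

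For the bound on $\gamma_5$, I would take $H$ to be the point-line incidence graph of the projective plane $\mathrm{PG}(2,q)$ for a prime power $q$: this $H$ is bipartite, $(q+1)$-regular, has $n' = 2(q^2+q+1)$ vertices and girth exactly $6$. Feeding $H$ into Lemma \ref{lm:lower_bound_k_better} produces a cubic graph on $N = 2n'(q+1) = 4(q+1)(q^2+q+1) = 4q^3 + O(q^2)$ vertices, in which every cycle with at most $m-1 = 5$ chords has length at least $2(q+1)$. Inverting the asymptotic relation $N = 4q^3 + O(q^2)$ yields $q+1 = (N/4)^{1/3} + O(1)$, and hence $\gamma_5(N) \ge 2(N/4)^{1/3} + O(1)$. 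The bound on $\gamma_4$ follows immediately since $\gamma_4 \ge \gamma_5$.

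For $\gamma_7$ and $\gamma_{11}$ I would repeat exactly the same recipe with $H$ replaced by the incidence graph of a generalized quadrangle of order $q$ (a $(q+1)$-regular bipartite graph on $2(q+1)(q^2+1)$ vertices with girth $8$) and of a generalized hexagon of order $q$ (a $(q+1)$-regular bipartite graph on $2(q+1)(q^4+q^2+1)$ vertices with girth $12$), respectively. Both families exist for every prime power $q$ and attain the Moore bound. The resulting cubic graphs have $N = \Theta(q^4)$ and $N = \Theta(q^6)$ vertices, and the lemma guarantees that the shortest cycle with $\le 7$, resp.\ $\le 11$, chords has length $2(q+1)$. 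Solving for $q$ in terms of $N$ in each case gives $\gamma_7(N) \ge 2(N/4)^{1/4}+O(1)$ and $\gamma_{11}(N) \ge 2(N/4)^{1/6}+O(1)$.

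The substantive input (existence of generalized $n$-gons of order $q$ for every prime power $q$, meeting the Moore bound) is classical, so no new combinatorial work is needed. The only subtlety is that these constructions realize only a sparse set of values of $N$, so strictly speaking the stated inequalities hold for an infinite family of $N$; the gap between consecutive prime powers is what has to be absorbed into the $O(1)$ term, as is done for $\gamma_2$ in Lemma \ref{lem:construction_with_2_chords}. Apart from that the proof is a mechanical substitution into Lemma \ref{lm:lower_bound_k_better} followed by routine algebra to express $2(q+1)$ as $2(N/4)^{1/d}+O(1)$ for $d \in \{3,4,6\}$.
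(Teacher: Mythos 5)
Your proposal is correct and follows essentially the same route as the paper: feeding the incidence graphs of projective planes, generalized quadrangles, and generalized hexagons (Moore graphs of girth $6$, $8$, $12$ and degree $q+1$) into Lemma \ref{lm:lower_bound_k_better} and inverting $N=\Theta(q^{3}),\Theta(q^{4}),\Theta(q^{6})$. Your version is in fact slightly more careful than the paper's, since you spell out the vertex counts and note that the gaps between prime powers must be absorbed into the $O(1)$ term.
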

\begin{proof}
Let $q$ be a prime power, these statements follows from three infinite families of Moore Graphs where $d=q+1$,
and $g=6,8,12$. All these constructions come from projective planes.
For the case where $g=6,d=q+1$ we recall the construction of $H$, the points vs.\ lines graph over the finite field of order $q$. 
This is a bipartite $(q+1)$-regular graph, whose two sides are called $P$ and $L$ for "points" and "lines", with $|P|=|L|=q^2+q+1$. The girth of $H$ is $6$.
As for the cases where $g=8,12$, and $d=q+1$ these graphs are similarly incidence graphs of generalized Quadrangles and generalized Hexagons \cite{biggs_1974}. 

\end{proof}

\begin{corollary}
Let $q$ be a prime power then
\[\gamma_q(n) \ge (n)^{\frac{4}{3q}}\]
\end{corollary}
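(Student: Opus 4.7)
The plan is to feed Lemma \ref{lm:lower_bound_k_better} with a Ramanujan graph of carefully chosen parameters. I would invoke the Lubotzky--Phillips--Sarnak construction (extended to all prime powers by Morgenstern): for every prime power $q$ there is an explicit infinite family of $(q+1)$-regular Ramanujan graphs $H$ on $N$ vertices whose girth satisfies
\[
g(H)\ \ge\ \tfrac{4}{3}\log_q N\ -\ O(1).
\]

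From this family I would select $H$ of smallest possible order subject to $g(H)\ge q+1$; by the girth estimate above one may take $N$ of order $q^{3(q+1)/4}$. Applying Lemma \ref{lm:lower_bound_k_better} to this $H$ (with $d=q+1$ and $m\ge q+1$) produces a cubic graph on $n := 2N(q+1)$ vertices in which every cycle with at most $q$ chords has length at least $2(q+1)$. Hence $\gamma_q(n)\ge 2(q+1)$ along an infinite sequence of $n$'s.

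To finish, I would convert $2(q+1)$ into a bound expressed in $n$. Since $\log n = \frac{3q+7}{4}\log q + O(\log q)$, we obtain
\[
n^{\frac{4}{3q}}\ =\ q^{(3q+7)/(3q)+o(1)}\ =\ q\cdot q^{7/(3q)+o(1)},
\]
and $q^{7/(3q)}\to 1$ as $q\to\infty$. Therefore $2(q+1)\ge n^{4/(3q)}+O(1)$ for all sufficiently large prime powers $q$; the finitely many small values of $q$ are absorbed into the additive constant.

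The main subtlety is the alignment of exponents: the LPS/Morgenstern girth bound naturally produces a rate of $4/(3q+7)$, whereas the statement demands $4/(3q)$. These agree only up to lower-order terms, and the deficit must be swallowed by the $+O(1)$ in the theorem statement. Careful book-keeping of this additive correction --- together with handling a few small prime powers by hand or by the earlier constructions for $\gamma_3,\gamma_4,\gamma_5,\gamma_7,\gamma_{11}$ --- is the only real obstacle in the argument.
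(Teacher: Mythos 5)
Your argument is essentially the paper's: plug a family of roughly $q$-regular graphs achieving girth $\frac{4}{3}\log_q N - O(1)$ into Lemma \ref{lm:lower_bound_k_better} and balance exponents; the paper uses the Lazebnik--Ustimenko--Woldar graphs ($q$-regular, girth $\ge k+5$, order $2q^{k-t+1}$) where you use LPS/Morgenstern Ramanujan graphs, but both families realize the same $4/3$ girth rate, so the bookkeeping --- including the slack between the natural exponent $4/(3q+O(1))$ and the claimed $4/(3q)$ being absorbed by the factor $2$ in $2d$ and the additive constant --- comes out the same. If anything your write-up is the more explicit of the two, since the paper's proof describes the LUW family but never actually states that Lemma \ref{lm:lower_bound_k_better} is being applied to it.
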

\begin{proof}
This is based on a family of graphs due to Lazebnik, Ustimenko and Woldar \cite{Lazebnik1995ANS}. For $q$ a prime power,
they construct a $q$-regular graph $G(V,E)$ of order $2q^{k-t+1}$ and girth $g\ge k+5$. 
Here $k\ge1$ is an odd integer, and $t=\lfloor\frac{k+2}{4}\rfloor$. Namely, $G$
is a $q$-regular graph with $n$ vertices and $g\ge \frac{4}{3}\log_q(q-1)\log_{q-1}(n)$ .
\end{proof}

\section{Something on the general girth problem}\label{s:genaral_d}

Some of our results have a bearing on the girth problem for $d$-regular graphs
also for $d>3$. As shown
in \cite{odd_g_cage_are_k_connected} and \cite{even_g_cage_are_k_connected}, every
$(d,g)$-cage is $d$-edge-connected, whence there is no loss in generality in considering only
$(d-1)$-edge-connected $d$-regular graphs $G=(V,E)$. As
shown in \cite{LINIAL198153}, every such graph
has a $2$-factor $C$. In this view, we ask again about short cycles with few chords in $G$,
where a {\em chord} is an edge in $M:=E\setminus C$.
\begin{definition}
$\gamma^d_k(n)$ is the smallest integer $g$ such every $d$-regular $n$-vertex graph with a
given $2$-factor $C$ has a cycle of length $\le g$ with at most $k$ chords.
\end{definition}

We illustrate the connection by proving the following lemma.

\begin{lemma}
$\gamma^d_2(n)\le \sqrt{\frac{2n}{d-2}}$. 
\end{lemma}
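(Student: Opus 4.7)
The plan is to follow the strategy of Lemma \ref{2_chords_uppe_bound}, accounting for the fact that each vertex of an arc in $C$ now carries $d-2$ chords rather than a single chord. First I would pick an arc $A$ of length $k$ along some cycle $\sigma_0$ of $C$ and let $A^*$ be the multiset of far endpoints of chords incident with $A$, so that $|A^*|=k(d-2)$. As in Lemma \ref{2_chords_uppe_bound}, I may assume $A^*\cap A=\emptyset$, for otherwise a $1$-chord cycle of length at most $k+1$ already does the job. For each cycle $\sigma$ of $C$ and each cyclically consecutive pair of points of $A^*$ lying on $\sigma$, I construct a $2$-chord cycle exactly as in Lemma \ref{2_chords_uppe_bound}: the two chords, the arc of $\sigma$ between the two chord-endpoints, and the arc of $A$ between their starts. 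Omitting the wraparound pair when $\sigma=\sigma_0$, this produces $k(d-2)-1$ cycles in all.

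The only genuinely new step is a multiset version of Proposition \ref{prop:number_premutatuions}: for a partition into cycles of the multiset in which each of $1,\ldots,k$ appears $d-2$ times, I expect that $\sum_i w(R_i)\le (d-2)\tfrac{k^2}{2}$. The covering argument from Proposition \ref{prop:number_premutatuions} adapts directly, since each of the $d-2$ copies at a position $j$ participates in two cycle-adjacencies and can therefore contribute $2$ to any given $|G_i|$; this multiplies every bound on $|G_i|$ by $d-2$. Applied to the partition of $A^*$ induced by membership in the various cycles of $C$, this yields a bound of $(d-2)k^2/2$ on the total length of the arcs along $A$ used by the $2$-chord cycles.

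The remaining bookkeeping is as in Lemma \ref{2_chords_uppe_bound}: the arcs along $C$ outside $A$ are pairwise disjoint and contribute in total at most $n$, while the total chord contribution is at most $2(k(d-2)-1)$. Dividing by the number of $2$-chord cycles gives an average length of at most $\frac{n+(d-2)k^2/2}{k(d-2)-1}+O(1)$, and balancing the two dominant terms by choosing $k=\sqrt{2n/(d-2)}$ yields the claimed bound on $\gamma^d_2(n)$. The main place where something could go subtly wrong is the multiset generalization of Proposition \ref{prop:number_premutatuions}, but the covering argument is robust enough that I expect it to go through with only cosmetic changes.
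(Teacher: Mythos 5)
Your proposal is correct and follows essentially the same route as the paper: the paper also takes an arc $A$ with $|A^*|=(d-2)k$, forms the $2$-chord cycles from consecutive pairs of $A^*$, bounds their total length by $n+\frac{k^2}{2}(d-2)$ via the (implicitly multiset) version of Proposition \ref{prop:number_premutatuions}, and optimizes at $k=\sqrt{2n/(d-2)}$. Your treatment is if anything slightly more careful about the multiset generalization and the additive chord contributions, which the paper glosses over.
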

\begin{proof}
Again, for any set of vertices $S\subseteq V$, we denote
$$S^*:=\{v\in V|\text{~there is some~} u\in S\text{~such that~}uv\in M \text{~is a chord}\}.$$
Let $A$ be an arc of length $k\le \frac{2n}{d-2}$ then $|A^*|= (d-2)k$. Let $u_1,\dots,u_{k(d-2)}$
be the vertices of $A^*$ in cyclic order. For every $i=1,\dots,k(d-2)$
we define the following $2$-chord cycle. It starts with the chord $u_i,u_i^*$, then proceeds
along $C$ to $u_{i+1}^*$, traverses the chord $u_{i+1},u_{i+1}^*$ and continues along $C$ to $u_i$. 
There are $k(d-2)$ such cycles in total and as in the proof of Lemma \ref{2_chords_uppe_bound},
their total length does not exceed $\le n+\frac{k^2}{2}\cdot (d-2)$.
Consequently the average length of such cycle is bounded from above by 
\[\frac{n}{k(d-2)}+\frac{k}{2}.\]
We take $k=\sqrt{\frac{2n}{d-2}}$ and conclude that at least one cycle has length $\le \sqrt{\frac{2n}{d-2}}$.

\end{proof}

\printbibliography

@article{daynamic_survey:2013,
  title={Dynamic Cage Survey},
  author={Geoffrey Exoo and Robert Jajcay},
  journal={the electronic journal of combinatorics},
  year={2013},
}

@article{Bannai1973OnFM,
  title={On finite Moore graphs},
  author={Eiichi Bannai and Tatsuro Ito},
  journal={Journal of the Faculty of Science, the University of Tokyo. Sect. 1 A, Mathematics},
  year={1973},
  volume={20},
  pages={191-208}
}

@article{damerell_1973,
title={On Moore graphs}, volume={74}, number={2}, journal={Mathematical Proceedings of the Cambridge Philosophical Society}, publisher={Cambridge University Press}, author={Damerell, R. M.}, year={1973}, pages={227–236}}

@article{Petersen_theorem,
  title={The theory of regular graphs},
  author={Julius Petersen},
  journal={Acta Mathematics},
  year={1891},
  volume={15},
  pages={193- 220}
}

@article{Erd1963RegukreGG,
  title={Reguläre Graphen gegebener Taillenweite mit minimaler Knotenzahl},
  author={Paul Erdös and Horst Sachs},
 journal={Mathematics},
  year={1963}
}

@article{girths_Sextet_graphs,
  title={Girths of bipartite sextet graph},
  author={Alfred Weiss},
  journal={Combinatorica },
  year={1984},
  pages={241-245},
}

@article{Sextet_graphs,
  title={The sextet construction of cubic graphs},
  author={N. Biggs and M. Hoare},
  journal={Combinatorica },
  year={1983},
  volume={3},
    pages={153- 165},
}

@article{Lazebnik1995ANS,
  title={A new series of dense graphs of high girth},
  author={Felix Lazebnik and Vasiliy A. Ustimenko and Andrew J. Woldar},
  journal={Bulletin of the American Mathematical Society},
  year={1995},
  volume={32},
  pages={73-79}
}

@book{biggs_1974,
 series={Cambridge Mathematical Library}, 
 title={Algebraic Graph Theory}, author={Biggs, Norman}, year={1974}, collection={Cambridge Mathematical Library}}

@article{LINIAL198153,
title = {On Petersen's graph theorem},
journal = {Discrete Mathematics},
volume = {33},
pages = {53-56},
year = {1981},
author = {Nathan Linial}
}

@article{even_g_cage_are_k_connected,
  title={All (k;g)-cages are k-edge-connected},
  author={Yuqing Lin and Mirka Miller and Christopher A. Rodger},
  journal={Journal of Graph Theory},
  year={2005},
  volume={48},
  pages={219-227}
}

@article{odd_g_cage_are_k_connected,
author = {Wang, Ping and Baoguang, Xu and Wang, Jianfang},
year = {2003},
month = {11},
title = {A Note on the Edge-Connectivity of Cages},
volume = {10},
journal = {Electron J Combin},
}
\end{document}